\documentclass[a4paper,12pt,intlimits,oneside]{amsart}
\usepackage{amsmath}
\usepackage{amsthm}
\usepackage{latexsym}
\usepackage{amssymb}
\usepackage{xcolor}
\usepackage{graphicx}
\usepackage{mathrsfs}
\numberwithin{figure}{section}
\def\R{{\mathbb R}}
\def\C{{\mathbb C}}

\def\T{{\mathbb T}}

\def\la{\langle}
\def\ra{\rangle}
\def\s{\vskip 0.25cm\noindent}

\def\build#1_#2^#3{\mathrel{
\mathop{\kern 0pt#1}\limits_{#2}^{#3}}}
\def\td_#1,#2{\mathrel{\mathop{\build\longrightarrow_{#1\rightarrow #2}^{}}}}
\DeclareFontFamily{U}{MnSymbolC}{}
\DeclareSymbolFont{MnSyC}{U}{MnSymbolC}{m}{n}
\DeclareFontShape{U}{MnSymbolC}{m}{n}{
    <-6>  MnSymbolC5
   <6-7>  MnSymbolC6
   <7-8>  MnSymbolC7
   <8-9>  MnSymbolC8
   <9-10> MnSymbolC9
  <10-12> MnSymbolC10
  <12->   MnSymbolC12}{}
\DeclareMathSymbol{\intprod}{\mathbin}{MnSyC}{'270}
\newtheorem{theorem}{Theorem}
\newtheorem{corollary}{Corollary}

\newtheorem{lemma}{Lemma}

\begin{document}
\title[Lax pair for sBO]{The Lax pair structure\\ for the spin Benjamin--Ono equation}
\author[P. G\'erard]{Patrick G\'erard}
\address{Laboratoire de Math\'ematiques d'Orsay,  CNRS, Universit\'e Paris--Saclay, 91405 Orsay, France} \email{{\tt patrick.gerard@universite-paris-saclay.fr}}

\begin{abstract}
We prove that the recently introduced spin Benjamin--Ono equation admits a Lax pair, and we deduce a family of conservation laws which allow to prove global wellposedness in all Sobolev spaces $H^k$ for every integer $k\geq 2$. We also infer an additional family of matrix valued conservation laws, of which the previous family are just the traces.
\end{abstract}

\subjclass[2010]{ 37K15 primary, 47B35 secondary}

\date{February 16,  2022}

\thanks {The author is grateful to Edwin Langmann for drawing his attention to equation \eqref{sBO} and for stimulating discussions.}

\maketitle

\section{Introduction}
In a recent paper \cite{BLL}, Berntson, Langmann and Lenells have introduced the following spin generalization of the Benjamin--Ono equation on the line $\R $ or on the torus $\T $,
$$\partial_tU+\{U,\partial_x U\} +H\partial_x^2U -i[U,H\partial_xU]=0\ ,\ x\in X,$$
where $X$ denotes $\R $ or $\T$, the unknown $U$ is valued into  $d\times d$ matrices, and $H$ denotes the scalar Hilbert transform on $X$ ; in fact, the authors chose the  normalization  $H=i\, {\rm sign}(D)$, so that $H\partial_x =-|D|$, where $|D|$ denotes the Fourier multiplier associated to the symbol $|k|$. Notice that, in front of the commutator term in the right hand side, we take a different sign from the one used in \cite{BLL}. However it is easy to pass to the other sign by applying the complex conjugation. Consequently, the above equation reads
\begin{equation}\label{sBO}
\partial_tU=\partial_x (|D|U-U^2)-i[U,|D|U]\ .
\end{equation}
 The purpose of this note is to prove that equation \eqref{sBO} enjoys a Lax pair structure, and to infer first consequences on the corresponding dynamics. 
\section{The Lax pair structure}
Let us first introduce some more notation. Given operators $A,B$, we denote
$$\{ A,B\} :=AB+BA\ ,\ |A,B]:=AB-BA\ $$
and $A^*$ denote the adjoint of $A$.
 We consider the Hilbert space $\mathscr H:=L^2_+(X,\C ^{d\times d})$ made of $L^2$ functions on $X$  with Fourier transforms supported in nonnegative modes, and valued into $d\times d$ matrices, endowed with the inner product
 $\la A\vert B\ra =\int_X {\rm tr}(AB^*)\, dx\ .$
  We denote by $\Pi_{\geq 0}$ the orthogonal projector from $L^2(X,\C^{d\times d})$ onto $\mathscr H$.
\s
According to the study of the integrability of the scalar Benjamin--Ono equation \cite{GK}, given $U\in L^2(X,\C^{d\times d})$ valued into $\C^{d\times d}$, we define on $\mathscr H$ the unbounded operator
$$L_U:=D-T_U\ ,\ D:=\frac 1i \partial_x\ ,$$
where ${\rm dom}(L_U):=\{ F\in \mathscr H: DF\in \mathscr H\}$, and $T_U$ is the Toeplitz operator of symbol $U$ defined by  $T_U(F):=\Pi_{\geq 0}(UF)$ . 
\s
It is easy to check that $L_U$ is selfadjoint if $U$ is valued in Hermitian matrices. However we do not need the latter property for establishing the Lax pair structure. If $U$ is smooth enough (say belonging to the Sobolev space $H^2$), we define the following bounded operator,
$$B_U:=i(T_{|D|U}-T_U^2)\ ,$$
which is antiselfadjoint if $U$ is valued in Hermitian matrices. Our main result is the following.
\begin{theorem}\label{laxpair} 
Let  $I$ be a time interval, and $U$ be a continuous function on $I$ valued  into  $H^2(X,\C^{d\times d})$ such that $\partial_tU$ is continuous valued into  $L^2(X,\C^{d\times d})$. Then $U$ is a solution of \eqref{sBO} on $I$ if and only if
$$\partial_tL_U=[B_U,L_U]\ .$$
\end{theorem}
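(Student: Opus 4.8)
The plan is to reduce the stated equivalence to a single algebraic identity between operators on $\mathscr H$, namely
$$[B_U,L_U]=-T_{\partial_x(|D|U-U^2)-i[U,|D|U]}\ ,$$
and then to invoke injectivity of the map $A\mapsto T_A$. Since the Fourier multiplier $D$ does not depend on time, the left-hand side of the Lax equation is simply $\partial_tL_U=-T_{\partial_tU}$, again a Toeplitz operator. Thus, once the displayed identity is established, the Lax equation $\partial_tL_U=[B_U,L_U]$ becomes $T_{\partial_tU}=T_V$ with $V$ the right-hand side of \eqref{sBO}, and this holds if and only if $\partial_tU=V$: indeed $T_W=0$ forces $W=0$, as seen by testing $\Pi_{\geq0}(WF)=0$ against functions $F$ concentrated at arbitrarily high frequencies (in the periodic case against $F=e^{ikx}M$, $k\geq0$, $M\in\C^{d\times d}$ constant), which recovers every Fourier mode of $W$. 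Both implications of the theorem therefore follow from the one identity.

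Two elementary lemmas on Toeplitz operators drive the computation, and I would state them first. Because $D$ commutes with $\Pi_{\geq0}$ and obeys the Leibniz rule $D(AF)=(DA)F+A\,DF$, one gets the commutator formula $[D,T_A]=T_{DA}$ on the natural domain. Writing $M_A$ for multiplication by $A$ and $\Pi_{<0}:=I-\Pi_{\geq0}$, the product defect of two Toeplitz operators is $T_AT_B=T_{AB}-\Pi_{\geq0}M_A\Pi_{<0}M_B$ as operators on $\mathscr H$. The hypothesis $U\in H^2$ guarantees that every symbol occurring below, in particular $D|D|U\in L^2$, is such that the relevant operators are well defined on smooth elements of $\mathrm{dom}(L_U)$, where the identity is understood. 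With these tools I would expand $[B_U,L_U]=[B_U,D]-[B_U,T_U]$, using $B_U=i(T_{|D|U}-T_U^2)$, the relation $[T_U^2,D]=-T_UT_{DU}-T_{DU}T_U$, and $[B_U,T_U]=i[T_{|D|U},T_U]$ (the $T_U^2$ part commuting with $T_U$). Abbreviating $G:=|D|U$ and using $D(U^2)=(DU)U+U(DU)$ together with $\partial_x=iD$, the Toeplitz-of-product contributions assemble to
$$-iT_{DG}+iT_{(DU)U}+iT_{U(DU)}+iT_{UG}-iT_{GU}=-T_{iD(G-U^2)-i[U,G]}=-T_V\ ,$$
which is exactly the desired symbol.

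The crux is to show that all remaining product-defect terms cancel. After the expansion these reduce, up to the scalar factor $i$, to
$$E=-2\,\Pi_{\geq0}M_{D\Pi_{<0}U}\,\Pi_{<0}M_U-2\,\Pi_{\geq0}M_U\,\Pi_{<0}M_{D\Pi_{\geq0}U}\ ,$$
where I have grouped the four mixed terms by means of the spectral relations $G+DU=2D\Pi_{\geq0}U$ and $G-DU=-2D\Pi_{<0}U$, which encode the difference between $|D|$ and $D$ through the frequency projections. Each summand of $E$ then vanishes by a support argument: $\Pi_{<0}M_UF$ has spectrum in the strictly negative modes, and multiplying it by $D\Pi_{<0}U$, whose spectrum is also strictly negative, keeps the spectrum strictly negative, so $\Pi_{\geq0}$ annihilates it; symmetrically, $M_{D\Pi_{\geq0}U}F$ has nonnegative spectrum for $F\in\mathscr H$, so $\Pi_{<0}$ annihilates it. This is the step where the precise form of $B_U$ and of the nonlinearity conspire, and the one I expect to be the main obstacle.

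Finally I would note that matrix non-commutativity requires no extra care: throughout the computation each symbol keeps its left/right position inside its Toeplitz operator, since one only reorganizes operator products and never commutes matrix factors, so the matrix-valued bookkeeping runs exactly as in the scalar case \cite{GK}. The only genuinely matrix-specific term, the commutator $-i[U,|D|U]=-i(UG-GU)$ of \eqref{sBO}, is matched precisely by the antisymmetric pair $iT_{UG}-iT_{GU}$ arising from $-[B_U,T_U]=-i[T_G,T_U]$, which closes the argument.
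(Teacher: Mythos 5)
Your proposal is correct and takes essentially the same route as the paper: the same reduction via injectivity of the symbol map $A\mapsto T_A$, the same product-defect lemma $T_AT_B=T_{AB}-\Pi_{\geq 0}M_A\Pi_{<0}M_B$ (the paper's Lemma~\ref{Tcom}), and the same frequency-support cancellation, since your identities $G+DU=2D\Pi_{\geq 0}U$ and $G-DU=-2D\Pi_{<0}U$ are exactly the paper's observations $\Pi_{\geq 0}(i|D|U)=\Pi_{\geq 0}(\partial_xU)$ and $\Pi_{<0}(i|D|U)=-\Pi_{<0}(\partial_xU)$ in different notation. The only difference is bookkeeping: the paper first isolates the identity $T_{\{U,\partial_xU\}}-\{T_U,T_{\partial_xU}\}+iT_{[U,|D|U]}-i[T_U,T_{|D|U}]=0$ and then kills it with the lemma, whereas you expand the full commutator at once, match the Toeplitz symbols with $-T_V$, and cancel the remaining defect terms directly.
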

\begin{proof}
Obviously, $\partial_tL_U=-T_{\partial_tU}$. Since $T_G=0$ implies classically $G=0$, the claim is equivalent to the identity
$$-T_{\partial_x (|D|U-U^2)-i[U,|D|U]}=[B_U,L_U]\ .$$
We have 
\begin{eqnarray*}
-T_{\partial_x (|D|U-U^2)-i[U,|D|U]}
&=&[iT_{|D|U},D]+T_{U\partial_xU+\partial_xU\, U}+iT_{[U,|D|U]}\\
&=&[B_U,D]+T_{U\partial_xU+\partial_xU\,U}-T_UT_{\partial_xU}-T_{\partial_xU}T_U+iT_{[U,|D|U]}\\
&=&[B_U,L_U]+T_{\{ U,\partial_xU\}} -\{ T_U,T_{\partial_xU}\}+iT_{[U,|D|U]}-i[T_U,T_{|D|U}] \end{eqnarray*}
So we have to check that
\begin{equation}\label{critical}
T_{\{ U,\partial_xU\}} -\{ T_U,T_{\partial_xU}\}+iT_{[U,|D|U]}-i[T_U,T_{|D|U}] =0\ .
\end{equation}
We need the following lemma, where we denote $\Pi_{<0}:=Id -\Pi_{\geq 0}\ .$ 
\begin{lemma}\label{Tcom}
Let $A, B\in L^\infty(X,\C^{d\times d})$. Then, for every $F\in \mathscr H$,
$$(T_{AB}-T_AT_B)F=\Pi_{\geq 0}(\Pi_{\geq 0}(A)\, \Pi_{<0}(\Pi_{<0}(B)F))\ .$$
\end{lemma}
Let us prove Lemma \ref{Tcom}. Write
$$T_{AB}F=\Pi_{\geq 0}(ABF)=\Pi_{\geq 0}(A\Pi_{\geq 0}(BF))+\Pi_{\geq 0}(A\Pi_{<0}(BF))=T_AT_BF+\Pi_{\geq 0}(A\Pi_{<0}(BF)), $$
so that, observing that the ranges of $\Pi _{\geq 0}$ and of $\Pi_{<0}$ are stable through the multiplication,
$$(T_{AB}-T_AT_B)F=\Pi_{\geq 0}(A\Pi_{<0}(BF))=\Pi_{\geq 0}(\Pi_{\geq 0}(A)\Pi_{<0}(\Pi_{< 0}(B)F))\ .$$
This completes the proof of Lemma \ref{Tcom}. 
\s
Let us apply Lemma \ref{Tcom} to $A=U$, $B=|D|U$. We get
\begin{eqnarray*}i(T_{U|D|U}-T_UT_{|D|U})F&=&\Pi_{\geq 0}(\Pi_{\geq 0}(U)\, \Pi_{<0}(\Pi_{<0}(i|D|U)F))\\
&=&-\Pi_{\geq 0}(\Pi_{\geq 0}(U)\, \Pi_{<0}(\Pi_{<0}(\partial_xU)F))\ ,
\end{eqnarray*}
and similarly
\begin{eqnarray*}i(T_{|D|U\,  U}-T_{|D|U}T_{U})F&=&\Pi_{\geq 0}(\Pi_{\geq 0}(i|D|U)\, \Pi_{<0}(\Pi_{<0}(U)F))\\
&=&\Pi_{\geq 0}(\Pi_{\geq 0}(\partial_xU)\, \Pi_{<0}(\Pi_{<0}(U)F))\ ,
\end{eqnarray*}
so that
\begin{eqnarray*}
(iT_{[U,|D|U]}-i[T_U,T_{|D|U}])F&=&-\Pi_{\geq 0}(\Pi_{\geq 0}(U)\, \Pi_{<0}(\Pi_{<0}(\partial_xU)F))\\
&&-\Pi_{\geq 0}(\Pi_{\geq 0}(\partial_xU)\, \Pi_{<0}(\Pi_{<0}(U)F))\\
&=&-T_{\{ U,\partial_xU\}} (F)+\{ T_U,  T_{\partial_xU}\} (F)\ ,
\end{eqnarray*}
using again Lemma \ref{Tcom}. Hence we have proved identity \eqref{critical}.
\end{proof}
\section{Conservation laws and global wellposedness}
The following is an application of Theorem \ref{laxpair}.
\begin{corollary}\label{gwp}
Assume $U_0$ belongs to the Sobolev space $H^2(X,\C^{d\times d})$, and is valued into Hermitian matrices. Then equation \eqref{sBO} has a unique solution $U$, depending continuously of $t\in \R$, valued into  Hermitian matrices of the Sobolev space $H^2(X)$,  and such that $U(0)=U_0$. Furthermore,  the following quantities are conservation laws,
$$\mathscr E_k(U)=\la L_U^k(\Pi_{\geq 0} U)\vert \Pi_{\geq 0}U\ra \ ,\ k=0,1,2\dots $$
In particular, the norm of $U(t)$ in the Sobolev space $H^2(X)$ is uniformly bounded for $t\in \R$.
\end{corollary}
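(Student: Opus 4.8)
The plan is to make \eqref{sBO} locally wellposed in $H^2$ by standard means and then use Theorem \ref{laxpair} to manufacture conservation laws strong enough to bound the $H^2$ norm for all time, whence global existence. For the local theory, note that the linear part $\partial_x|D|$ of \eqref{sBO} is skew-adjoint and generates a unitary group on each $H^s$, while the nonlinear terms $\partial_x(U^2)$ and $[U,|D|U]$ lose exactly one derivative; since in one dimension $H^2$ is an algebra continuously embedded in $W^{1,\infty}$, I would run a classical energy estimate (integrating by parts to absorb the single derivative loss, exactly as for scalar Benjamin--Ono) to produce, for $U_0\in H^2$ valued in Hermitian matrices, a unique maximal solution $U\in C(I,H^2)$ with $\partial_tU\in C(I,L^2)$, depending continuously on $U_0$, together with the blow-up criterion that the solution extends as long as $\|U(t)\|_{H^2}$ remains bounded. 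The solution stays Hermitian-valued because the right-hand side of \eqref{sBO} preserves Hermitian matrices: $|D|U$ and $U^2$ are Hermitian when $U=U^*$, and $i[U,|D|U]$ is then Hermitian as well.

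Next I would feed in Theorem \ref{laxpair}. For Hermitian-valued $U$ the operator $B_U$ is bounded and anti-selfadjoint and $t\mapsto B_{U(t)}$ is norm-continuous, so the propagator $\mathcal U(t)$ solving $\partial_t\mathcal U=B_U\mathcal U$, $\mathcal U(0)=\mathrm{Id}$, exists and is unitary, and the Lax equation $\partial_tL_U=[B_U,L_U]$ yields $L_{U(t)}=\mathcal U(t)L_{U_0}\mathcal U(t)^*$, so the spectrum of $L_U$ is conserved. The decisive additional ingredient, to be proved by a direct computation on Fourier modes in the spirit of \eqref{critical}, is the identity
$$\partial_t(\Pi_{\geq0}U)=B_U(\Pi_{\geq0}U)+iL_U^2(\Pi_{\geq0}U)\ .$$
Indeed, expanding $B_U+iL_U^2=i(D^2+T_{|D|U}-DT_U-T_UD)$ and writing out $\Pi_{\geq0}\partial_tU$ from \eqref{sBO}, the equality reduces, after the substitution $p\mapsto n-p$ in the convolution sums, to the elementary relation $|n-p|=n+|p|$ for $p<0\le n$, the contributions with $p\ge0$ matching termwise.

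Granting this, set $W:=\Pi_{\geq0}U$; conservation of $\mathscr E_k=\la L_U^kW\vert W\ra$ is then a short computation. Since $L_U$ is selfadjoint, $\mathscr E_k$ is real, and $\partial_tL_U^k=[B_U,L_U^k]$. Differentiating and substituting $\partial_tW=B_UW+iL_U^2W$, the four terms involving $B_U$ cancel in pairs (using $B_U^*=-B_U$), while the terms $i\la L_U^{k+2}W\vert W\ra$ and $-i\la L_U^{k+2}W\vert W\ra$ coming from the two occurrences of $\partial_tW$ cancel against one another; hence $\tfrac{d}{dt}\mathscr E_k=0$. For $U_0\in H^2$ this is rigorous for $\mathscr E_0,\dots,\mathscr E_4$ (the only ones then guaranteed finite), after first carrying out the computation for smooth data and passing to the limit through the continuous dependence from the local theory.

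Finally I would turn the $\mathscr E_k$ into the uniform $H^2$ bound. Because $U$ is Hermitian its negative Fourier modes are the adjoints of its nonnegative ones, so $\|U\|_{H^s}\simeq\|W\|_{H^s}$ up to the zero mode (controlled by $\mathscr E_0$). With $L_U=D-T_U$ and $T_U$ bounded on $H^j$ for $0\le j\le 2$ in terms of $\|U\|_{H^2}$, the even conserved quantities are $\mathscr E_{2m}=\|L_U^mW\|^2$, whose leading part is $\|W\|_{\dot H^m}^2$. The remainder is a sum of products from which, after integrating by parts to remove the single genuinely top-order term, only lower Sobolev norms of $W$ survive; these interpolate between $\|W\|_{\dot H^m}$ and the conserved $\|W\|_{L^2}^2=\mathscr E_0$ and are absorbed by Young's inequality. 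Bootstrapping from $\mathscr E_0$ through $\mathscr E_2$ and $\mathscr E_4$ then bounds $\|W\|_{H^2}$, hence $\|U(t)\|_{H^2}$, by a function of the time-independent values $\mathscr E_0,\mathscr E_2,\mathscr E_4$; combined with the blow-up criterion this gives global existence and the stated uniform bound. I expect this coercivity step to be the main obstacle: a crude triangle inequality fails, since $\|W\|_{\dot H^m}^2-\mathscr E_{2m}$ carries a top-order contribution whose coefficient is only $O(\sqrt{\mathscr E_0})$ rather than small, so one must exploit the Toeplitz and commutator structure to cancel it before interpolating. By contrast, once the displayed identity is in hand the conservation of the $\mathscr E_k$ is algebraically clean.
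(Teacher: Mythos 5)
Your proposal is correct and follows essentially the same route as the paper: local wellposedness by a Kato-type energy scheme, the key identity $\partial_t U_+ = B_U(U_+)+iL_U^2(U_+)$ for $U_+=\Pi_{\geq 0}U$ (which the paper obtains by applying $\Pi_{\geq 0}$ to \eqref{sBO}, and which your Fourier-mode verification correctly reduces to $|n-p|=n+|p|$ for $p<0\le n$), conservation of the $\mathscr E_k$ via antiselfadjointness of $B_U$ and $iL_U^2$, and a bootstrap on the conserved quantities to control the $H^2$ norm. The only deviation is minor: the paper's bootstrap uses $\mathscr E_0,\mathscr E_1,\mathscr E_2,\mathscr E_4$ (Gagliardo--Nirenberg applied to $\mathscr E_1$ first yields $H^{1/2}$ control), whereas you use only the even laws $\mathscr E_0,\mathscr E_2,\mathscr E_4$; your feared coercivity obstruction does not actually materialize, since Gagliardo--Nirenberg gives $\|T_U U_+\|_{L^2}\lesssim \mathscr E_0^{3/4}\,\|U\|_{H^1}^{1/2}$, so the cross term in $\mathscr E_2$ is $O\bigl(\mathscr E_0^{3/4}\|DU_+\|_{L^2}^{3/2}\bigr)$, subcritical in $\|DU_+\|_{L^2}$ and absorbed by Young's inequality exactly as you propose.
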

\begin{proof}  
The local wellposedness in the Sobolev space $H^2$ follows from an easy adaptation of  Kato's iterative scheme --- see e.g. Kato \cite{K} for hyperbolic systems. Global wellposedness will follow if we show that conservation laws control the $H^2$ norm.
Set $U_+:=\Pi_{\geq 0}U\ ,\ U_-:=\Pi_{<0}U\ .$ Applying $\Pi_{\geq 0}$ to both sides of \eqref{sBO}, we get
$$\partial_tU_+=-i\partial_x^2U_+-2T_U\partial_xU_+-2T_{\partial_xU_-}U_+=iL_U^2(U_+)+B_U(U_+)\ .$$
Therefore, from Theorem \ref{laxpair}, 
\begin{eqnarray*}
\frac{d}{dt}\la L_u^k (U_+)\vert U_+\ra &=&\la [B_U,L_U^k]U_+\vert U_+\ra +\la L_U^k(iL_U^2(U_+)+B_U(U_+))\vert U_+\ra +\\&& +\la L_U^k(U_+)\vert iL_U^2(U_+)+B_U(U_+)\ra \\
&=& 0\ ,
\end{eqnarray*}
since $B_U$ and $iL_U^2$ are antiselfajoint. \\
Now observe that $\mathscr E_0(U)=\| U_+\|_{L^2}^2$. Since $U$ is Hermitian, we have
$$U=  \begin{cases} U_++ U_+^* &{\rm if}\  X=\R \  ,\\
U_++U_+^* -\la U_+\ra &{\rm if}\  X= \T \ , 
\end{cases} $$
where $\la F\ra $ denotes the mean value of a function $F$ on $\T $. We infer that $\mathscr E_0(U)$ controls the $L^2$ norm of $U$.   Let us come to $\mathscr E_1(U)$. In view of the Gagliardo--Nirenberg inequality,
\begin{eqnarray*}
\mathscr E_1(U)&=&\la DU_+\vert U_+\ra -\la T_U(U_+)\vert U_+\ra \geq \la DU_+\vert U_+\ra -O(\| U_+\| _{L^3}^3)\\
&\geq & \la DU_+\vert U_+\ra- O(\la DU_+\vert U_+\ra ^{1/2}\| U_+\|_{L^2}^{2})-O(\| U_+\|_{L^2}^3)\ .
\end{eqnarray*}
Consequently, $\mathscr E_0(U)$ and $\mathscr E_1(U)$ control $\| U_+\|_{L^2}^2+\la DU_+\vert U_+\ra $, which is the square of the $H^{1/2} $ norm of $U_+$, since $U_+$ only has nonnegative Fourier modes. Therefore the $H^{1/2}$ norm of $U$ is controlled by $\mathscr E_0(U)$ and $\mathscr E_1(U)$.\\ Since 
$\mathscr E_2(U)$ is the square of $L^2$ norm of $L_U(U_+)$, and since the $L^2$ norm of $T_U(U_+)$ is controlled by the $H^{1/2}$ norm of $U$ by the Sobolev estimate, we infer that $\mathscr E_0(U)$, $\mathscr E_1(U)$ and $\mathscr E_2(U)$ control the $L^2$ norms of $U$ and of $\partial_xU$, namely the Sobolev $H^1$ norm of $U$.
\\ Finally, $\mathscr E_4(U)$ is the square if the $L^2$ norm of $L_U^2(U_+)$. Since $L_U(U_+)$ is already controlled in $L^2$ and $U$ is controlled in $L^\infty $ by the Sobolev inclusion $H^1\subset L^\infty $, we infer that the $H^1$ norm of $L_U(U_+)$ is controlled. But $H^1$ is an algebra, so the $H^1$ norm of $T_U(U_+)$ is also controlled. Finally, we infer that $\{ \mathscr E_n(U), n\leq 4\}$  control the  $H^1$ norms of $U_+$ and $\partial_xU_+$, namely the $H^2$ norm of $U_+$, and finally of $U$.
\end{proof}
{\bf Remarks.}
\begin{enumerate}
\item If the initial datum $U$ belongs to the Sobolev space $H^k$ for an integer $k>2$, a similar argument shows that the $H^k$ norm of $U$ is controlled by the collection  $\{ \mathscr E_n (U), 0\leq n\leq 2k\}$.
\item In \cite{BLL}, the evolution of multi--solitons for \eqref{sBO} is derived through a pole ansatz, and the question of keeping the poles  away from the real line --- or from the unit circle in the case $X=\T $--- is left open. Since Corollary  \ref{gwp} implies that the $L^\infty $ norm of the solution stays bounded as $t$ varies, this implies a positive answer to this question, as far as the poles do not collide. In fact, we strongly suspect that such a collision does not  affect the structure of the pole ansatz, because it is likely that multisolitons have a characterization in terms of the spectrum of $L_U$, as it has in the scalar case \cite{GK}.
\end{enumerate}
Let us say a few more about the conservation laws. The conservation laws $\mathscr E_k$ can be explicitly computed in terms of $U$. For simplicity, we focus on $\mathscr E_0$ and $\mathscr E_1$. In the case $X=\R $, we have exactly 
$$\mathscr E_0(U)=\frac 12 \int _\R {\rm tr}(U^2)\, dx\ ,$$
and
\begin{eqnarray*}
 \mathscr E_1(U)&=&\la DU_+\vert U_+\ra -\la T_U(U_+)\vert U_+\ra \\
 &=&\int_\R {\rm tr} \left ( \frac 12 U|D|U -\frac 13 U^3\right )\, dx \ ,
 \end{eqnarray*}
 so we recover  the Hamiltonian function derived in \cite{BLL}.\\
  In the case $X=\T $, the above formulae must be slightly modified due the zero Fourier mode. This leads us to a {\bf bigger set of conservation laws}. Indeed, every constant matrix $V\in \C^{d\times d}$ is a special element of $\mathscr H$, and we observe that $B_U(V)=-iL_U^2(V)\ .$
  Arguing exactly as in the proof of Corollary \ref{gwp}, we infer that, for every integer $\ell \geq 1$, for every pair of constant matrices $V,W$, the quantity 
  $\la L_U^\ell (V)\vert W\ra $ is a conservation law. Since $V,W$ are arbitrary, this means that, if ${\bf 1}$ denotes the identity matrix,  all the matrix--valued functionals
  $$\mathscr M_{\ell-2}(U):=\int_{\T} L_U^{\ell} ({\bf 1})\, dx$$
  for $\ell \geq 1$, are conservation laws. If the measure of $\T $ is normalised to $1$, we have for instance
    \begin{eqnarray*}
    \mathscr M_{-1}(U)&=&-\la U_+\ra =-\la U\ra \ ,\\ 
    \mathscr M_0(U)&=&\frac 12\la U^2-iUHU\ra +\frac 12\la U\ra ^2\ .
   \end{eqnarray*}  
  Then one can check that
  \begin{eqnarray*}
  \mathscr E_0(U)&=&   \frac 12{\rm tr}(\la U^2\ra )+\frac 12 {\rm tr}(\la U\ra ^2)\ ,\\
 \mathscr E_1(U)&=& {\rm tr }\left \la \frac 12 U|D|U -\frac 13 U^3\right \ra -\frac 53 {\rm tr} [\la U \ra ^3] -{\rm tr}[\mathscr M_0(U) \la U\ra ]\ .
  \end{eqnarray*} 
 Observe again that the first term in the right hand side of the expression of $\mathscr E_1(U)$ is the opposite of the Hamiltonian function in \cite{BLL}.\\
In the case $X=\R$, all the matrix valued expressions $\mathscr M_k(U)$ make sense if $k\geq 0$ and are again conservation laws. For instance,
 $$\mathscr M_0(U)=\frac 12\int_\R( U^2-iUHU)\, dx\ .$$ 
 Finally, notice that in both cases $X=\T$ and $X=\T$, we have $$\mathscr E_k(U)={\rm tr}\mathscr M_{k}(U)$$ for every $k\geq 0$.

%%%%%%%%%%%%%%%%%%%%%%%%%%%%%%%%%%%%%%%%%%%%%%%

\end{document}